\theoremstyle{plain} 
\newtheorem{thm}{Theorem}[section]
\newtheorem{lemma}[thm]{Lemma}
\newtheorem{cor}[thm]{Corollary} 
\newtheorem{conj}[thm]{Conjecture}
\theoremstyle{remark}
\newtheorem{remark}[thm]{Remark}
\newtheorem{example}[thm]{Example}
\theoremstyle{definition}
\newtheorem{defin}[thm]{Definition}
\begin{document}

\title{A note on the factorization of iterated quadratics over finite fields}
\date{September 17, 2023}
\subjclass[2010]{11T55, 37P25, 12E05, 20E08}
\author{Vefa Goksel}
\address{Towson University, Towson, MD 21252, USA}
\email{vgoksel@towson.edu}

\begin{abstract} 
Let $f$ be a monic quadratic polynomial over a finite field of odd characteristic. In 2012, Boston and Jones constructed a Markov process based on the post-critical orbit of $f$, and conjectured that its limiting distribution explains the factorization of large iterates of $f$. Later on, Xia, Boston, and the author did extensive Magma computations and found some exceptional families of quadratics that do not seem to follow the original Markov model conjectured by Boston and Jones. They did this by empirically observing that certain factorization patterns predicted by the Boston-Jones model never seem to occur for these polynomials, and suggested a multi-step Markov model which takes these missing factorization patterns into account. In this note, we provide proofs for all these missing factorization patterns. These are the first provable results that explain why the original conjecture of Boston and Jones does not hold for \emph{all} monic quadratic polynomials.
\end{abstract}

\maketitle

\section{Introduction}
Let $f$ be a polynomial of degree $d\geq 2$ over a field $K$. Let $\bar{K}$ be an algebraic closure of $K$. For $n\geq 1$, we define the $n$-th iterate of $f$ by

\[f^n: = \underbrace{f\circ f\circ \cdots \circ f}_\text{$n$ times}.\]
We also make the convention that $f^0(x) = x$. One obtains an infinite $d$-ary rooted tree using the roots of $f^n(x)$ (in $\bar{K}$) for $n\geq 0$ as follows: For $n\geq 0$, we put roots of $f^n$ for the $n$th level of the tree, and we draw an edge between a root $\alpha$ of $f^n$ and a root $\beta$ of $f^{n+1}$ if $f(\beta) = \alpha$. If one further assumes that all iterates of $f$ are separable over $K$, then this becomes a complete infinite $d$-ary rooted tree. We denote this tree by $T_d$. Note that since any Galois element $\sigma \in \text{Gal}(\bar{K}/K)$ commutes with $f$, it preserves the connectivity relation on the tree, which yields to a continuous homomorphism 
\[\rho:\text{Gal}(\bar{K}/K) \rightarrow \text{Aut}(T_d).\]
The map $\rho$ is called an arboreal Galois representation. Describing the image of $\rho$ as a subgroup of the automorphism group $\text{Aut}(T_d)$ is one major open question in arithmetic dynamics.  There are natural comparisons between arboreal Galois representations and the well-established theory of $\ell$-adic Galois representations, which have inspired many works on arboreal Galois representations especially in the last two decades. We refer the reader to \cite{Jones14} and \cite[Section 5]{BIJMST19} for a general survey of the field. Also see \cite{Ahmad19,Bridy19,BDGHT21,Ferraguti19,FPC19} for a few examples of recent work on the subject. Based on the existing results, it is widely believed that im$(\rho)$ has a finite index inside of $\text{Aut}(T_d)$ (or in a smaller overgroup, see \cite{BDGHT21}) unless $f$ belongs to certain exceptional families of polynomials.

Let $K_n$ be the splitting field of $f^n$ over $K$. The image of $\rho$ can also be given as the inverse limit of the Galois groups $\text{Gal}(K_n/K)$. Suppose that $K$ is a number field. For any prime $\mathfrak{p}$ in the ring of integers $\mathcal{O}_K$, the image of the Frobenius class $\text{Frob}_\mathfrak{p}$ can be given by describing its image in each $\text{Gal}(K_n/K)$. Let $R_\mathfrak{p}:=\mathcal{O}_K/\mathfrak{p}$. Let $g\in \mathcal{O}_K[x]$ be a monic irreducible polynomial. Suppose that the reduced polynomial $\bar{g}\in R_{\mathfrak{p}}[x]$ is separable. It is well-known that the cycle structure of the action of $\text{Frob}_\mathfrak{p}$ on the roots of $g$ is given by the factorization type of the reduced polynomial $\bar{g}\in R_\mathfrak{p}[x]$. In particular, this is true for all but finitely many prime ideals $\mathfrak{p}$ in $\mathcal{O}_K$. Since $R_\mathfrak{p}$ is always a finite field, understanding images of Frobenius classes requires one to study the factorization of iterates of $f$ over finite fields.

This motivation led Boston and Jones \cite{JonBos12} to study factorizations of quadratic polynomials over finite fields, which is the simplest non-trivial case. Let $\mathbb{F}_q$ be a finite field of odd characteristic, and let $f\in \mathbb{F}_q[x]$ be a monic, irreducible quadratic. Boston and Jones constructed a Markov model to keep track of the factorization of iterates of $f$, and conjectured that the limiting distribution arising from their model explains the full factorization of large iterates of $f$. Later on, Xia, Boston, and the author \cite{GXB15} found new empirical data through Magma, which suggested that a more complicated model is required for some exceptional families of quadratic polynomials. Based on this empirical observation, they have made a multi-step Markov model, and conjectured that this new Markov model explains the factorization of large iterates of $f$ for any monic, irreducible quadratic $f\in \mathbb{F}_q[x]$.

The main motivation behind the multi-step Markov model constructed by Xia, Boston, and the author was some extensive Magma computations, which suggested that certain factorization patterns predicted by Boston-Jones model never occur for the exhibited exceptional families of polynomials. The goal of this paper is to \emph{prove} that those factorization patterns indeed never occur. Our work provides the first provable results which explain why the Boston-Jones model does not work for \emph{all} monic quadratic polynomials. In particular, it proves one of conjectures stated in \cite{GXB15} after a necessary correction (see Remark~\ref{rmk:about t=2 theorem}).

We also would like to note that besides its aforementioned connection to arboreal Galois representations, the factorization of composition of polynomials over finite fields is a well-studied subject in its own right. In addition to two articles already discussed, see also \cite{FGR16,FGR19,GomNic10,GOS14,HB-G19,OS10,Reis} for several examples of recent articles on the subject. See also Section 18 of the recent survey article \cite{BIJMST19} by Benedetto et. al. for a general overview of the subject. In fact, our main results address a case of \cite[Question 18.9]{BIJMST19}.

Before we state our main results, we first recall some notation and definitions. Let $f\in \mathbb{F}_q[x]$ be a monic quadratic, where $\mathbb{F}_q$ is a finite field of odd characteristic. Let $\gamma\in \mathbb{F}_q$ be the (unique) critical point of $f$. The \emph{post-critical orbit} of $f$ is given by the set
\[\mathcal{O}_f = \{f(\gamma),f^2(\gamma),\dots\}.\]
Since we work over a finite field, the post-critical orbit is necessarily finite. This is equivalent to saying that there exist some minimal integers $m\geq 0, n\geq 1$ such that $f^m(\gamma)=f^{m+n}(\gamma)$. In this case, we say that $f$ has \emph{orbit type} $(m,n)$. For notational convenience, we let $b_i:=f^i(\gamma)$ for $i\geq 1$. Hence, the post-critical orbit of $f$ becomes
\[\mathcal{O}_f = \{b_1,b_2,\dots,b_{m+n-1}\}.\]
\begin{remark}
\label{rem:orbit}
Many authors would define the orbit of a point by including the point itself. However, excluding the point will be convenient for our purposes in this paper, which is why we define the post-critical orbit as above.
\end{remark}

For simplicity, we will occasionally let $o_f:=|\mathcal{O}_f| = m+n-1$ in the paper. 

Boston and Jones defined the notion called \emph{$f$-type} of a monic, irreducible polynomial $g$ over $\mathbb{F}_q$, which helps one to keep track of the irreducible factors of $g(f^n(x))$ as $n$ gets large. We now recall the definition:
\begin{defin}
	\label{def:type}
Let $f\in \mathbb{F}_q[x]$ be a monic quadratic with orbit type $(m,n)$, and let $g\in \mathbb{F}_q[x]$ be any monic polynomial. We define the \emph{type} of $g$ at any $\beta\in \mathbb{F}_q$ to be $s$ if $g(\beta)$ is a square in $\mathbb{F}_q$, and $n$ if $g(\beta)$ is not a square in $\mathbb{F}_q$. We then define the \emph{$f$-type} of $g$ to be a string of length $m+n-1$, where the $k$-th entry of the string is the type of $g$ at $b_k$ (with the notation above).
\end{defin}
\begin{example}
	\label{ex:type}
Take $f=x^2+4x+6\in \mathbb{F}_7[x]$, and $g=x^3+x+1$. The critical point of $f$ is $\gamma=5\in \mathbb{F}_7$, hence we get $\mathcal{O}_f=\{2,4,1\}$. Since $g(2)=4$, $g(4)=6$, and $g(1)=3$, the $f$-type of $g$ is $snn$.
\end{example}
As mentioned earlier, the $f$-type of a polynomial $g$ is a useful tool that one can use to understand factorizations of $g(f^n(x))$ as $n$ gets large. For instance, for any monic, irreducible polynomial $g\in \mathbb{F}_q[x]$ with even degree, if the first $i$ entries of the $f$-type of $g$ are $n$, it follows that $g(f^j(x))$ is irreducible over $\mathbb{F}_q$ for all $j\leq i$ (see \cite[Lemma 2.5]{JonBos12}). On the other hand, if the first entry is $s$, it follows that $g(f(x))$ factors as a product of two irreducible polynomials of equal degree over $\mathbb{F}_q$ (see \cite[Proposition 2.6]{JonBos12}). See Section~\ref{sec:background} for more details on how the $f$-type of $g$ affects the factorizations of iterates $g(f^n(x))$.

We are now ready to state our main results.

\begin{thm}
\label{thm:t=2}
Let $f\in \mathbb{F}_q[x]$ be a monic quadratic with orbit type $(2,n)$ for some $n\geq 1$, and let $g\in \mathbb{F}_q[x]$ be any monic, irreducible polynomial of even degree whose $f$-type starts with $ns$. Then the type of each monic irreducible factor of $g(f^2(x))$ at $b_{n}$ is $s$.
\end{thm}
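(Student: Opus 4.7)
Let $d := \deg(g)$, even by hypothesis. The plan is to produce the two irreducible factors $h_1, h_2$ of $g(f^2(x))$ explicitly in the tower $\mathbb{F}_q \subseteq K := \mathbb{F}_{q^d} \subseteq L := \mathbb{F}_{q^{2d}}$, then evaluate them at $b_n$ and exploit an orbit identity forced by orbit type $(2,n)$.

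First, I apply the even-degree factorization criterion recalled in the introduction twice: since $g(b_1)$ is a non-square, $G := g \circ f$ is monic irreducible of degree $2d$, and since $G(b_1) = g(b_2)$ is a square, $g(f^2(x)) = G(f(x))$ factors over $\mathbb{F}_q$ as $h_1 h_2$ with each $h_i$ monic irreducible of degree $2d$. Throughout I use the standard fact that for an extension of finite fields of odd characteristic, an element of $\mathbb{F}_{q^k}^\times$ is a square iff its norm to $\mathbb{F}_q^\times$ is a square.

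Next, I identify $h_1, h_2$ concretely. Fix a root $\alpha \in K$ of $g$ and $\eta \in L$ with $\eta^2 = \alpha - b_1$, and set $\lambda := b_1 - \gamma$. The $4d$ roots of $g(f^2(x))$ are then of the form $\gamma \pm \xi$ with $\xi \in L$ and $\xi^2 = -\lambda \pm \eta$. Because $g(b_2) = \Norm_{K/\mathbb{F}_q}(b_2 - \alpha)$ is a square in $\mathbb{F}_q$, there exists $\mu \in K$ with $\mu^2 = b_2 - \alpha$. Writing $\xi = a + b\eta$ in the $K$-basis $\{1,\eta\}$ of $L$ and expanding $\xi^2 = -\lambda + \eta$ yields $2ab = 1$ and, after a short manipulation,
\[
a^2 \;=\; \frac{\epsilon \mu - \lambda}{2}
\]
for an appropriate sign $\epsilon \in \{\pm 1\}$. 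Letting $\tau$ be the nontrivial element of $\Gal(L/K)$, one has $\tau(\xi) = a - b\eta$, so $\xi + \tau(\xi) = 2a \in K$ and $\xi \tau(\xi) = \epsilon \mu \in K$. Grouping the $4d$ roots into their two $\Gal(L/\mathbb{F}_q)$-orbits and pairing $\tau$-conjugates within each orbit then presents the factors as
\[
h_1(x) \;=\; \prod_{j=1}^{d}\bigl((x-\gamma)^2 - 2a_j(x-\gamma) + \epsilon \mu_j\bigr),\qquad h_2(x) \;=\; \prod_{j=1}^{d}\bigl((x-\gamma)^2 + 2a_j(x-\gamma) + \epsilon \mu_j\bigr),
\]
where $(a_j, \mu_j)$ runs through the $\Gal(K/\mathbb{F}_q)$-conjugates of $(a,\mu)$; both products lie in $\mathbb{F}_q[x]$ by Galois invariance and have the correct degree $2d$.

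Finally, orbit type $(2,n)$ forces $b_{n+1} = 2\gamma - b_1$, because $b_{n+1}$ is a preimage of $b_{n+2} = b_2$ under $f$ distinct from $b_1$ (the latter by minimality of $m=2$). Setting $w := b_n - \gamma$, this gives the key identity $w^2 = b_{n+1} - b_1 = -2\lambda$. Substituting $\epsilon \mu_j = 2a_j^2 + \lambda$ and $-\lambda = w^2/2$ into each factor of $h_1(b_n) = \prod_j(w^2 - 2a_j w + \epsilon\mu_j)$, each factor collapses via
\[
w^2 - 2a_j w + \epsilon\mu_j \;=\; 2a_j^2 - 2a_j w + \tfrac{w^2}{2} \;=\; \tfrac{1}{2}(2a_j - w)^2,
\]
giving $h_1(b_n) = 2^{-d}\bigl(\Norm_{K/\mathbb{F}_q}(2a - w)\bigr)^2$. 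Since $d$ is even, $2^{-d}$ is a perfect square in $\mathbb{F}_q$, so $h_1(b_n)$ is a square. The identical computation yields $h_2(b_n) = 2^{-d}\bigl(\Norm_{K/\mathbb{F}_q}(2a + w)\bigr)^2$, also a square. The main obstacle is the middle step: packaging the symmetric functions $\xi + \tau(\xi)$ and $\xi \tau(\xi)$ through the single element $a \in K$ whose square has the specific form $(\epsilon\mu - \lambda)/2$, since this identity is precisely what lets each factor collapse to $\tfrac{1}{2}(2a_j - w)^2$ once the orbit relation $w^2 = -2\lambda$ is imposed.
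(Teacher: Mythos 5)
Your proof is correct and, at its core, performs the same computation as the paper's: both approaches reduce $h_i(b_n)$ to $2^{-d}$ times the square of a norm of an element of $\mathbb{F}_{q^d}$, using the orbit identity $(b_n-\gamma)^2 = -2c$ (equivalently your $w^2 = -2\lambda$) together with the identity $\theta_1^2+\theta_2^2=-2c$ on the paired roots of the irreducible factor, which is exactly your $a^2 = (\epsilon\mu-\lambda)/2$ in disguise. The packaging differs: the paper invokes the Boston--Jones structure theorem to write $g(f^2(x)) = h(x-\gamma)h(-(x-\gamma))$ at once and then uses a general lemma on Galois actions (Lemma~\ref{lem:GalConj} and Corollary~\ref{cor:Sym_Coef}) to certify that the symmetric expression $\prod_j(c_n - \theta_{j1}-\theta_{j2})$ lies in $\mathbb{F}_q$, whereas you build the two irreducible factors explicitly inside the tower $\mathbb{F}_q\subseteq K\subseteq L$, expressing the pairing of $\tau$-conjugate roots as quadratics over $K$ and getting $\mathbb{F}_q$-rationality by pushing down through the norm. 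Your route is more hands-on and avoids the intermediate lemma/corollary, at the cost of carrying out some verifications the paper gets for free (e.g.\ that $\eta\notin K$, that the sign $\epsilon$ is uniform across $\Gal(K/\mathbb{F}_q)$-conjugates, and that $\gamma+\xi$ and $\gamma+\tau(\xi)$ land in the same factor because $\tau=\sigma^d$); these all check out. One small cosmetic point: it is worth noting explicitly that $b\neq 0$ (hence $2ab=1$ is solvable), which follows because $b=0$ would force $\gamma+\xi\in K$, contradicting irreducibility of a factor of degree $2d$. Overall this is a valid proof that reconstructs, in more concrete terms, exactly the mechanism the paper abstracts into its auxiliary lemmas.
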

\begin{remark}
\label{rmk: explanation for theorem}
Since the orbit type of $f$ is $(2,n)$, evaluating $g(f^2(x))$ at $b_n$ gives
$g(f^2(b_n)) = g(b_{n+2}) = g(b_2)$, which is a square in $\mathbb{F}_q$ by hypothesis. On the other hand, again by hypothesis and using \cite[Proposition 2.6]{JonBos12}, the polynomial $g(f^2(x))$ factors as $h(x-\gamma)h(-(x-\gamma))$ for some monic irreducible polynomial $h\in \mathbb{F}_q[x]$. So, what the theorem is saying is that although the product $h(b_n-\gamma)h(-(b_n-\gamma))$ is a square in $\mathbb{F}_q$ by the hypothesis, it cannot happen that $h(b_n-\gamma)$ and $h(-(b_n-\gamma))$ are both non-squares.
\end{remark}
\begin{remark}
\label{rmk:about t=2 theorem}
Theorem~\ref{thm:t=2} proves one of conjectures in \cite{GXB15} (See Conjecture 4.5), after a minor error and a typo are corrected in the conjecture: The polynomials in the conjecture must be assumed to be monic, because the notion \emph{type} of a polynomial is not defined for non-monic polynomials, as indicated in the errata published by Boston and Jones \cite{JonBos20}. Secondly, one needs to look at the iteration $g(f^2(x))$ as opposed to $g(f(x))$. The reason for the latter is that $g(f(x))$ does not factor if $g$ has $f$-type $ns$, so the statement becomes uninteresting because it would be trivially true or trivially false only based on the $f$-type of $g$. 
\end{remark}
\begin{remark}
Quadratic polynomials over a finite field whose critical orbits have tail length $2$ have shown curious properties in other contexts as well. For instance, see \cite[Proposition 6.4]{Jones12}. The author thinks that it is worth investigating whether there is a link between these two occurrences of this class of polynomials.
\end{remark}
\begin{thm}
\label{thm:(3,1)}
Let $f\in \mathbb{F}_q[x]$ be a monic quadratic with orbit type $(3,1)$, and let $g\in \mathbb{F}_q[x]$ be any monic, irreducible polynomial of even degree. Suppose that $f$-type of $g$ starts with $nn$. If $H$ is any monic irreducible factor of $g(f^3(x))$, then the product $H(b_1)H(b_2)$ is a square in $\mathbb{F}_q$.
\end{thm}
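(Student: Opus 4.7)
The plan is to exploit the special structure of orbit type $(3,1)$, where $b_3$ is a fixed point of $f$ and $\{b_2,b_3\}$ is the full $f$-preimage of $b_3$; in particular $b_2+b_3=2\gamma$. This gives the polynomial identities
\[
f(x)-b_3=(x-b_2)(x-b_3),\qquad f(x)-b_2=(x-b_1)(x-\tilde b_1),
\]
where $\tilde b_1:=2\gamma-b_1$, and multiplying yields the ``double'' identity
\[
f^2(x)-b_3=(x-b_1)(x-\tilde b_1)(x-b_2)(x-b_3).
\]
These are the basic computational tool.

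For an irreducible factor $H$ of $g(f^3(x))$, of necessarily even degree, the map $\alpha\mapsto f^2(\alpha)$ sends the roots of $H$ surjectively onto the roots of a single irreducible factor $H''$ of $g(f(x))$ with a constant fiber size $k\in\{1,2,4\}$. Substituting $\alpha$ into the double identity and taking products over the roots of $H$ converts it into the master identity
\[
H(b_1)\,H(\tilde b_1)\,H(b_2)\,H(b_3)=H''(b_3)^k.
\]
Whenever $k\in\{2,4\}$ the right-hand side is visibly a square in $\mathbb{F}_q$.

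To convert this into the claim about $H(b_1)H(b_2)$ alone, I would use the tower $g=g_0\to g_1\to g_2\to g_3=H$ with each $g_{i+1}\mid g_i\circ f$. At each level either $g_i(b_1)$ is a nonsquare and $g_i\circ f$ stays irreducible (so $g_{i+1}=g_i\circ f$ is invariant under the involution $\iota(x):=2\gamma-x$), or $g_i(b_1)$ is a square and $g_i\circ f=g_{i+1}\cdot(g_{i+1}\circ\iota)$ splits into two factors swapped by $\iota$. Because $\iota$ sends $b_2\leftrightarrow b_3$ and $b_1\mapsto\tilde b_1$ in orbit type $(3,1)$, evaluating the split relation at $b_1$ and $b_2$ gives the level-wise identities $g_{i+1}(b_1)g_{i+1}(\tilde b_1)=g_i(b_2)$ and $g_{i+1}(b_2)g_{i+1}(b_3)=g_i(b_3)$. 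A case analysis on the splitting pattern $(s_1,s_2,s_3)\in\{S,N\}^3$ then reduces $H(b_1)H(b_2)$ modulo squares either to the right-hand side of the master identity (a square when $k\ge 2$) or, in the degenerate $(S,S,S)$ case, to a lower-level quantity of the form $g_1(b_3)$, which can be controlled using the arithmetic input peculiar to $(3,1)$: both $b_2-b_1=(b_1-\gamma)^2$ and $b_3-b_1=(b_3-\gamma)^2$ are squares in $\mathbb{F}_q$.

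The main obstacle is the fully-split pattern $(S,S,S)$, where no factor in the tower is $\iota$-invariant and neither of the two simple reductions above suffices on its own. Handling this case is the technical crux and requires a more delicate telescoping argument that simultaneously tracks the values of $H$, of its $\iota$-partner $\tilde H=H\circ\iota$, and of their ancestors $g_i$ and $\tilde g_i$ at the four points $b_1,b_2,b_3,\tilde b_1$, so that the product $H(b_1)H(b_2)$ can finally be written as a square times either $g_1(b_3)^2$ or a quantity shown to be a square via the factorizations of $b_2-b_1$ and $b_3-b_1$ above.
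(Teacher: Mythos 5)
Your strategy is genuinely different from the paper's: you work with product identities coming from $f^2(x)-b_3=(x-b_1)(x-\tilde b_1)(x-b_2)(x-b_3)$ and a tower of level-wise split relations, whereas the paper factors $H(x)=h(x-\gamma)$ over $\overline{\FF}_q$, groups the roots according to their images under $f$ in $V(g(f(x)))$, and uses a Galois-invariance argument (Corollary~\ref{cor:Sym_Coef}) plus the orbit-$(3,1)$ identity $(b_1-\gamma)^2(b_2-\gamma)^2=2(b_2-\gamma)$ to exhibit $h(b_1-\gamma)h(b_2-\gamma)$ \emph{explicitly} as $2^{\,\text{even}}\cdot T(c_1c_2)^2$ with $T\in\FF_q[x]$.

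The problem is that your identities only ever compare $H$ to its $\iota$-conjugate $H^\iota=H\circ\iota$, and this never touches the quantity you need. Writing $\tilde b_1=\iota(b_1)$, $b_3=\iota(b_2)$, your master identity reads $\bigl(H(b_1)H(b_2)\bigr)\bigl(H^\iota(b_1)H^\iota(b_2)\bigr)=H''(b_3)^k$, and your level-wise split relations are $H(b_1)H^\iota(b_1)=g_2(b_2)$, $H(b_2)H^\iota(b_2)=g_2(b_3)$. All of these tell you that $H(b_1)H(b_2)$ and $H^\iota(b_1)H^\iota(b_2)$ lie in the \emph{same} square class, but not which one; knowing that their product is a square cannot, by itself, show that each factor is a square, and showing that $H^\iota(b_1)H^\iota(b_2)$ is a square is exactly the theorem applied to the other factor. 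This is not a peculiarity of the $(S,S,S)$ case you flag as the crux: it already occurs, and is not resolved, in the case the paper actually treats, namely $g$ of $f$-type $nns$, where $g(f^2)$ is irreducible and $g(f^3)$ splits; there $k=2$ and the right-hand side of your master identity is $g(b_3)^2$, but you have no leverage to split this square between $H$ and $H^\iota$. The arithmetic facts you cite at the end ($b_2-b_1=(b_1-\gamma)^2$, $b_3-b_1=(b_3-\gamma)^2$) are correct but you never connect them to the required conclusion, and they are not the identity the paper actually uses, which is the sharper $(b_1-\gamma)^2(b_2-\gamma)^2=2(b_2-\gamma)$ together with $(b_1-\gamma)^2+(b_2-\gamma)^2=-2c$. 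So as written the proposal has a genuine gap: the missing step is precisely the entire content of the theorem.
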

\begin{remark}
\label{rmk:nnn vs nns}
If $f$-type of $g$ is $nnn$, then $g(f^3(x))$ is irreducible over $\mathbb{F}_q$ by Lemma 2.5 in \cite{JonBos12}. Since $f$ has orbit type $(3,1)$, one obtains $H(b_1) = g(f^3(b_1)) = g(f^3(b_2)) = H(b_2)$, which makes the implication in Theorem~\ref{thm:(3,1)} trivially true. Therefore, the interesting case is when $g$ has $f$-type $nns$.
\end{remark}
\begin{remark}
\label{rmk: about main results}
Theorem~\ref{thm:t=2} and Theorem~\ref{thm:(3,1)} together provide theoretical explanation for all the exceptional families of polynomials empirically observed in \cite{GXB15}. In particular, if $f$ is as in Theorem~\ref{thm:t=2} or Theorem~\ref{thm:(3,1)}, and $g\in\mathbb{F}_q[x]$ is any monic, irreducible polynomial of even degree, then $f$-types of irreducible factors of $g(f^2(x))$ (resp. $g(f^3(x))$) must belong to a more restricted set than what was assumed in the original Boston-Jones model \cite[Proposition 2.6]{JonBos12}. See Section~\ref{sec:proofs} (particularly Corollaries \ref{cor:missing_(2,n)} and \ref{cor:missing_(3,1)}) for more details on this. The nature of calculations leading to proofs seems to indicate that these exceptions may be results of some algebraic coincidences rather than a more general pattern. In this sense, our results can also be thought of as an evidence for the following conjecture, which had appeared in an earlier version of \cite{GXB15}.
\end{remark}
\begin{conj}
\label{conj:exceptions}
Let $f\in \mathbb{F}_q$ be a monic, quadratic polynomial, where $\mathbb{F}_q$ is a finite field of odd characteristic. The distribution of the factorization process of $f$ converges to limiting distributions arising from Boston-Jones model unless $f$ has orbit type $(3,1)$ or $(2,n)$ for some $n\geq 1$.
\end{conj}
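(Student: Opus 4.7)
The plan is to combine an equidistribution argument based on character sums with a structural classification of the algebraic coincidences that force deviations from the Boston--Jones model, aiming to show that such coincidences occur precisely for orbit types $(2,k)$ and $(3,1)$.

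First I would reformulate the claim. The Boston--Jones transition probabilities encode a uniform-distribution hypothesis on the vector of Legendre symbols $(\chi(g(b_1)),\dots,\chi(g(b_{o_f})))$ as $g$ ranges over the irreducible factors produced at each iteration of $f$. Convergence to the Boston--Jones limit is therefore equivalent to joint equidistribution of these symbols subject only to the relations forced by the action of $f$ on $\mathcal{O}_f$. Passing to sums over monic irreducibles $g$ of large degree and invoking function-field Weil bounds, every non-trivial character sum
\begin{equation*}
\sum_{g}\chi\!\bigl(g(b_{i_1})\cdots g(b_{i_k})\bigr)
\end{equation*}
is $o(\#\{g\})$ as $\deg g \to \infty$ unless $\prod_j (x - b_{i_j})$ is a perfect square in $\FF_q[x]$. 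This establishes the desired equidistribution for a single application of $g$.

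The delicate part is to show that equidistribution propagates under iteration. For an irreducible factor $H$ of $g(f(x))$, the values $H(b_j)$ are computed from $g$ via resultant identities together with the crucial relation $f(x) - b_j = (x-\gamma)^2 - (b_j - b_1)$. Thus the quadratic character of $H(b_j)$ reduces to Legendre symbols of polynomial expressions in the differences $b_i - b_1$. The Boston--Jones model fails precisely when there is a non-trivial multiplicative relation in $\FF_q^\times/(\FF_q^\times)^2$ among products of these differences beyond those implied by the orbit dynamics $b_{i+1} = (b_i - \gamma)^2 + b_1$. I would then perform a case analysis: for orbit type $(2,k)$ the pre-periodic structure forces $b_{k+1} = 2\gamma - b_1$, which is the collapse exploited in Theorem~\ref{thm:t=2}; for $(3,1)$ the fixed point gives $b_3 - b_1 = (b_3-\gamma)^2$, the square relation behind Theorem~\ref{thm:(3,1)}; and for all other $(m,n)$ one should verify that no analogous identity arises.

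The main obstacle is this last verification. The most promising approach is to regard the post-critical orbit as a generic dynamical system over the parameter scheme of quadratics of fixed orbit type $(m,n)$, and to prove that the square classes of $\{b_i - b_j\}$ are algebraically independent modulo the obvious relations whenever $(m,n)$ is neither of the form $(2,k)$ nor equal to $(3,1)$. This would reduce Conjecture~\ref{conj:exceptions} to a finite computation for small $(m,n)$ together with an asymptotic genericity argument for large orbit length. Carrying this reduction out rigorously, and ruling out all hidden coincidences without an exhaustive search biased toward the patterns already known, will likely require genuinely new ideas and is the central technical hurdle.
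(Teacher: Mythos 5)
The statement you are trying to prove is not a theorem of the paper at all: it is stated as Conjecture~\ref{conj:exceptions} and remains open, with Theorems~\ref{thm:t=2} and~\ref{thm:(3,1)} offered only as evidence that the excluded orbit types $(2,n)$ and $(3,1)$ genuinely deviate from the Boston--Jones model. So there is no paper proof to compare against, and your proposal does not close the gap either; by your own final paragraph the key step (ruling out hidden square-class coincidences for every orbit type other than $(2,n)$ and $(3,1)$, and making the ``genericity'' argument rigorous) is deferred as ``the central technical hurdle.'' A proof cannot delegate its main content to future ideas, so what you have is a research program, not a proof.

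Beyond that structural problem, the equidistribution step has a concrete flaw. The Boston--Jones model is about the specific irreducible factors of $f^n$ (or of $g(f^n(x))$ for $g$ arising in the process): these form a thin, dynamically generated family, one or two new factors per factor per iteration, whose types are produced by the algebraic identities of the critical orbit. Averaging $\chi\bigl(g(b_{i_1})\cdots g(b_{i_k})\bigr)$ over \emph{all} monic irreducibles $g$ of large degree and applying Weil-type bounds says nothing about this sparse family; indeed, if such an averaging argument sufficed, it would prove the original Boston--Jones conjecture as well, which is still open even for generic orbit types. The correct mechanism, as the paper's proofs show, is not equidistribution but exact algebraic identities (Lemma~\ref{lem:identities} combined with Corollary~\ref{cor:Sym_Coef}) forcing certain products like $h(b_n-\gamma)$ or $h(b_1-\gamma)h(b_2-\gamma)$ to be squares; your case analysis correctly identifies the relations $b_{n+1}=2\gamma-b_1$ and $b_3-b_1=(b_3-\gamma)^2$ behind these, but establishing the \emph{absence} of any analogous forcing identity for all other orbit types, and then deducing actual convergence of the factorization distribution, is precisely what nobody knows how to do.
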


The organization of the paper is as follows: In Section~\ref{sec:background}, we will provide some necessary background. In Section~\ref{sec:lemmas}, we will prove a few auxiliary lemmas. In Section~\ref{sec:proofs}, we will prove Theorem~\ref{thm:t=2} and Theorem~\ref{thm:(3,1)}.
\section{Background}
\label{sec:background}
In this section, we give some background, and recall definitions and results that we will use in the rest of the paper.

Let $f\in \mathbb{F}_q[x]$ be a monic quadratic with the unique critical point $\gamma\in \mathbb{F}_q$, where $\mathbb{F}_q$ is a finite field of odd characteristic. The \emph{type space} of $f$ is the set $S_f:=\{s,n\}^{o_f}$. For instance, if $f=x^2+1\in \mathbb{F}_3$, we have $S_f=\{ss,sn,ns,nn\}$ since $f$ has post-critical orbit size $2$. The polynomial $f$ has a natural action on the type space, as follows: If $t$ is a type, one obtains $f(t)$ by shifting each entry one position to the left, and using the former $k$th entry as the new final entry, where $k$ is such that $f^{o_f+1}(\gamma)=f^k(\gamma)$. Note that this action depends only on the orbit type of $f$. Let $g\in \mathbb{F}_q[x]$ be any irreducible polynomial of even degree. Any irreducible factor $h\in \mathbb{F}_q[x]$ of $g(f(x))\in \mathbb{F}_q[x]$ is called an \emph{immediate descendant} of $g$. If the $f$-type of $g$ starts with $n$, then $g(f(x))$ is irreducible over $\mathbb{F}_q$ (by \cite[Lemma 2.5]{JonBos12}), so $g$ has only one immediate descendant. On the other hand, if the $f$-type of $g$ starts with $s$, then $g(f(x))$ factors as the product of two irreducible polynomials of same degree in $\mathbb{F}_q[x]$ (by \cite[Proposition 2.6]{JonBos12}), hence $g$ has two immediate descendants in this case. As proved by Boston and Jones \cite[Proposition 3.4]{JonBos12}, some types cannot occur as immediate descendants of polynomials with certain types. The following lemma is a slightly different version of \cite[Proposition 3.4]{JonBos12} with our notation.
\begin{lemma}
\label{lem:allowable}
Let $f\in \mathbb{F}_q[x]$ be a monic quadratic with the critical point $\gamma$. Let $f$ have orbit type $(m,n)$. Suppose that $g\in \mathbb{F}_q$ is an irreducible polynomial of even degree such that $g(f^i(x))$ is separable for every $i\geq 1$. Let $a_1a_2\dots a_{m+n-1}$ be the $f$-type of an irreducible factor of $g(f^i(x)) \in\mathbb{F}_q[x]$ for some $i\geq 0$. If $a_1=s$, then $d_1\dots d_{m+n-1}$ cannot occur as an immediate descendant of a polynomial with $f$-type $a_1\dots a_{m+n-1}$ unless one of the following two conditions holds:
\begin{enumerate}
	\item $m=0$
	\item $m>0$ and $d_{m-1}d_{m+n-1} = a_m$.
\end{enumerate}
\end{lemma}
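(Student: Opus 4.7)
The plan rests on the symmetry $f(x) = f(2\gamma - x)$, which is immediate from $f(x) = (x - \gamma)^2 + b_1$. By \cite[Proposition 2.6]{JonBos12}, the hypothesis $a_1 = s$ produces a factorization $g(f(x)) = h_1(x) h_2(x)$ into two monic irreducibles of equal degree $\deg g$; the claim concerns an arbitrary one of them, which I will call $h$.

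Applying the symmetry, $h_1(2\gamma - x) h_2(2\gamma - x) = g(f(x)) = h_1(x) h_2(x)$, so by unique factorization in $\mathbb{F}_q[x]$ the involution $x \mapsto 2\gamma - x$ either fixes each $h_i$ (up to a unit) or swaps them. I would rule out the fixed case as follows: if $\beta$ and $2\gamma - \beta$ were both roots of $h_1$, then $h_1$ would be divisible by $(x - \beta)(x - (2\gamma - \beta)) = f(x) - f(\beta)$, and pairing all roots of $h_1$ in this way (there are no fixed points, since $\gamma$ cannot be a root of $h_1$ because $g(b_1) \neq 0$) would force $h_1 = \tilde{g}(f(x))$ for some divisor $\tilde{g}$ of $g$, hence $h_1 = g(f(x))$ by the irreducibility of $g$, contradicting $\deg h_1 = \deg g$. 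Therefore $h_1(2\gamma - x) = h_2(x)$ (leading coefficients agree because $\deg g$ is even), giving
\[
h(x)\, h(2\gamma - x) \;=\; g(f(x))
\]
for either choice of $h$.

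Finally, $b_{m-1}$ and $b_{m+n-1}$ are both preimages of $b_m$ under $f$ (since $f(b_{m+n-1}) = b_{m+n} = b_m$), and Vieta forces the two preimages of any point under $f$ to sum to $2\gamma$, so as long as these preimages are distinct we have $b_{m+n-1} = 2\gamma - b_{m-1}$. Distinctness holds for $m \geq 2$ by the minimality in the definition of orbit type, and the case $m = 1$ does not arise at all (the only preimage of $b_1 = f(\gamma)$ is $\gamma$ itself, so $f(b_n) = b_1$ would force $b_n = \gamma$ and collapse the orbit type to $(0, n)$). Evaluating the displayed identity at $x = b_{m-1}$ then yields $h(b_{m-1})\, h(b_{m+n-1}) = g(b_m)$, and taking quadratic characters translates this into $d_{m-1} d_{m+n-1} = a_m$ under the identification $s \leftrightarrow +1$, $n \leftrightarrow -1$. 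The only genuinely substantive step is the swap-versus-fix dichotomy in the second paragraph; everything else is bookkeeping.
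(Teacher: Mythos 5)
The paper does not prove this lemma itself---it is quoted from \cite[Proposition 3.2]{JonBos12}---so you are supplying a proof from scratch, and what you have written is correct. The key relation you derive, $h_1(2\gamma-x)=h_2(x)$, is in fact already available in closed form: Lemma~\ref{lem:GalConj}(a) (which is exactly \cite[Proposition 2.6]{JonBos12}, the result you cite for the equal-degree factorization) states that $g(f(x))=h(x-\gamma)h(-(x-\gamma))$, so the two irreducible factors are $h(x-\gamma)$ and $h(\gamma-x)$, and the swap under $x\mapsto 2\gamma-x$ is immediate without the fix-versus-swap dichotomy. Your re-derivation of that symmetry via unique factorization and the ``no fixed factor'' argument is a valid alternative, though it has one small unstated step: when you conclude $h_1=\tilde g(f(x))$ and then invoke the irreducibility of $g$, you are implicitly using that $\tilde g$ lies in $\mathbb{F}_q[x]$ rather than merely $\overline{\mathbb{F}}_q[x]$; this does hold, since $\tilde g\circ f=h_1\in\mathbb{F}_q[x]$ with $f$ monic and nonconstant over $\mathbb{F}_q$ forces $\tilde g\in\mathbb{F}_q[x]$, but it deserves a sentence. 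On the other hand, your observation that $m=1$ can never occur for a quadratic $f$ (because $b_1=f(\gamma)$ has $\gamma$ as its unique preimage, so $f(b_n)=b_1$ would force $b_n=\gamma$ and collapse the orbit type to $(0,n)$) is a genuinely necessary point that the lemma statement glosses over---when $m=1$ the symbol $d_{m-1}=d_0$ is not even an entry of the $f$-type---and you are right to flag it. The remaining steps (identifying $b_{m-1}$ and $b_{m+n-1}$ as the two distinct preimages of $b_m$, evaluating $h(x)h(2\gamma-x)=g(f(x))$ at $x=b_{m-1}$, and passing to quadratic characters with the identification $s\leftrightarrow+1$, $n\leftrightarrow-1$) are all sound, giving $d_{m-1}d_{m+n-1}=a_m$ for $m\geq 2$ as required.
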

The following definition will also be crucial in the rest of the paper.
\begin{defin}
\label{def:n-step}
Let $f\in \mathbb{F}_q[x]$ be a monic quadratic, and let $g\in \mathbb{F}_q[x]$ be a monic polynomial of even degree. Let $k\geq 2$ be an integer. For each $0\leq i\leq k-1$, let $g(f^i(x))\in \mathbb{F}_q[x]$ have $t_i$ many irreducible factors with $f$-types $a_{i1},a_{i2},\cdots,a_{it_i}$. Then we call the chain
\[a_{01}/a_{02}/\cdots/a_{0t_0}\rightarrow a_{11}/a_{12}/\cdots/a_{1t_1}\rightarrow \cdots \rightarrow a_{(k-1)1}/a_{(k-1)2}/\cdots/a_{(k-1)t_{k-1}}\]
a \emph{$(k-1)$-step transition}.
\end{defin}
Boston and Jones constructed the Markov model for factorization of iterates of monic quadratic polynomials based on the assumption that if a type satisfies one of the conditions listed in Lemma~\ref{lem:allowable}, then it would always occur as an immediate descendant. However, Goksel, Xia, and Boston \cite{GXB15} later noticed through Magma computations that certain $2$-step and $3$-step transitions never occur for some exceptional families of polynomials $f$, although they satisfy one of the conditions in Lemma~\ref{lem:allowable}. Our goal in the remaining sections will be to \emph{prove} that these transitions indeed never occur for the exceptional families of quadratics exhibited in \cite{GXB15}.
\section{Auxiliary lemmas}
\label{sec:lemmas}
For any $F\in \overline{\mathbb{F}}_q[x]$, we let $V(F)$ be the vanishing set of $F$ over $\overline{\mathbb{F}}_q$. The following lemma together with its corollary will be important in the proofs of Theorem~\ref{thm:t=2} and Theorem~\ref{thm:(3,1)}. In particular, they will together be used to justify that certain expressions lie in $\mathbb{F}_q$, which will be crucial in proving Theorem~\ref{thm:t=2} and Theorem~\ref{thm:(3,1)}. The first part of the lemma is already given in \cite[Proposition 2.6]{JonBos12}, but we restate here for the convenience of the reader since it is used in the remaining parts of the lemma.
\begin{lemma}
\label{lem:GalConj}
Let $f\in \mathbb{F}_q[x]$ be any monic quadratic with the critical point $\gamma\in \mathbb{F}_q$, and let $g\in \mathbb{F}_q[x]$ be any irreducible polynomial. Suppose that $g(f^i(x))\in \mathbb{F}_q[x]$ is separable for every $i\geq 1$. For some $i\geq 1$, assume that $g(f^i(x))\in \mathbb{F}_q[x]$ is irreducible, and $g(f^{i+1}(x))\in \mathbb{F}_q[x]$ is reducible. Then the following three statements are true:

\begin{enumerate}[(a)]
\item $g(f^{i+1}(x)) = Ch(x-\gamma)h(-(x-\gamma))$ for a monic, irreducible polynomial $h\in \mathbb{F}_q[x]$, where $C\in \mathbb{F}_q$ is the leading coefficient of $g\in \mathbb{F}_q[x]$.
\item Let $\alpha_1,\alpha_2,\dots, \alpha_k, 2\gamma-\alpha_1,2\gamma-\alpha_2,\dots, 2\gamma-\alpha_k\in \overline{\mathbb{F}}_q$ be the (distinct) roots of $g(f(x))$. For $1\leq j\leq k$, define the sets $A_j$ and $B_j$ by
\[A_j:= V(h(x-\gamma))\cap V(f^i-\alpha_j).\]
and
\[B_j = V(h(x-\gamma))\cap V(f^i-2\gamma+\alpha_j).\]
Let $A_{f,g}^{(i)}:=\{A_1,A_2,\dots,A_k\}$ and $B_{f,g}^{(i)}:=\{B_1,B_2,\dots,B_k\}$. Set $C_{f,g}^{(i)} = A_{f,g}^{(i)}\cup B_{f,g}^{(i)}$. Any Galois element $\sigma\in \text{Gal}(\overline{\mathbb{F}}_q/\mathbb{F}_q)$ naturally induces a map $\tau_\sigma: C_{f,g}^{(i)} \rightarrow C_{f,g}^{(i)}$, which defines a transitive group action on the set $C_{f,g}^{(i)}$.
\item With the notation in part (b), for any $\sigma\in \text{Gal}(\overline{\mathbb{F}}_q/\mathbb{F}_q)$ and $1\leq j_1,j_2\leq k$, we have $\tau_\sigma(A_{j_1}) = A_{j_2}$ if and only if $\tau_{\sigma}(B_{j_1}) = B_{j_2}$.
\end{enumerate}
\end{lemma}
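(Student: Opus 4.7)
The plan is to prove the three parts in sequence, with part (a) providing the key structural decomposition of $g(f^{i+1})$ and parts (b), (c) following from Galois-equivariance of the iteration map.

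For part (a), the essential observation is that $f(2\gamma-x)=f(x)$ (since $\gamma$ is the critical point of the monic quadratic $f$), so every root $\beta$ of $g(f^{i+1}(x))$ is paired with the root $2\gamma-\beta$, and $f$ sends both to the same root of $g(f^i(x))$. Let $G:=\Gal(\overline{\mathbb{F}}_q/\mathbb{F}_q)$. Since $g(f^i)$ is irreducible, $G$ acts transitively on the set of such pairs. Reducibility of $g(f^{i+1})$ forces $G$ to have multiple orbits on the root set. I would then argue that if some orbit contained both elements of even one pair, then by transitivity on pairs it would contain both elements of every pair and hence every root, contradicting reducibility; so each orbit contains exactly one element from each pair, forcing exactly two orbits swapped by $x\mapsto 2\gamma-x$. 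Writing the factor corresponding to one orbit as $F_1(x)=h(x-\gamma)$ for a monic irreducible $h$ (after shifting the center to $\gamma$) makes the other factor $F_1(2\gamma-x)=h(-(x-\gamma))$, and a leading-coefficient check (using that $\deg h=2^i k$ is even for $i\ge 1$) yields the stated factorization.

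For part (b), the key tool is the Galois-equivariant surjection $f^{i+1}:V(h(x-\gamma))\to V(g)$; its fibers are precisely the sets $A_j\cup B_j$, indexed by $j$ via the bijection $j\leftrightarrow f(\alpha_j)$ between $\{1,\dots,k\}$ and $V(g)$. Since $h(x-\gamma)\in\mathbb{F}_q[x]$, $V(h(x-\gamma))$ is $G$-stable, and equivariance forces $G$ to permute the fibers $A_j\cup B_j$; the induced permutation on the indexing set coincides with the $G$-action on $V(g)$, which is transitive because $g$ is irreducible. I would then define the natural maps by $\tau_\sigma(A_j):=A_{\sigma\cdot j}$ and $\pi_\sigma(B_j):=B_{\sigma\cdot j}$, where $\sigma\cdot j$ denotes the image of $j$ under this induced permutation. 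Transitivity of both induced actions is then immediate. Part (c) becomes essentially tautological with this setup: both $\tau_\sigma(A_{j_1})=A_{j_2}$ and $\pi_\sigma(B_{j_1})=B_{j_2}$ are equivalent to the single condition $\sigma(f(\alpha_{j_1}))=f(\alpha_{j_2})$ on $V(g)$, so they hold simultaneously.

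The main obstacle is the correct setup in part (b). Because $g(f(x))$ is itself irreducible over $\mathbb{F}_q$ (a standard consequence of irreducibility of $g(f^i)$), the $2k$ roots $\{\alpha_j,2\gamma-\alpha_j\}$ of $g(f)$ form a single Galois orbit, and a generic $\sigma$ may map an individual set $A_j$ to some $B_{j'}$ rather than another $A_{j'}$. The natural induced maps $\tau_\sigma,\pi_\sigma$ therefore cannot be defined as direct push-forwards $A_j\mapsto\sigma(A_j)$; one must instead descend through the coarser fibration $f^{i+1}:V(h(x-\gamma))\to V(g)$, where the stronger hypothesis (irreducibility of $g$, not merely of $g(f)$) produces a well-defined transitive action on indices and makes part (c) fall out for free.
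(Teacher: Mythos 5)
Your proof is correct, and parts (b)--(c) take a genuinely different route from the paper's. For (a) the paper simply cites Boston--Jones; your orbit-counting argument is a self-contained alternative and is fine. For (b), the paper defines $\tau_\sigma(A_j)=A_{j'}$ directly from the condition ``$\sigma(\beta)=\beta'$ with $\beta\in A_j$, $\beta'\in A_{j'}$,'' spends most of its effort on a well-definedness check via $f^i\circ\sigma=\sigma\circ f^i$, and then verifies (c) separately. You instead descend through the Galois-equivariant map $f^{i+1}\colon V(h(x-\gamma))\to V(g)$, indexing both families by the roots $f(\alpha_j)$ of $g$, and transport the transitive Galois action on $V(g)$; with that bookkeeping (c) is tautological. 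Your closing observation is the substantive point: since $g(f)$ itself is irreducible, there do exist $\sigma$ with $\sigma(\alpha_j)=2\gamma-\alpha_{j'}$, and for such $\sigma$ one has $\sigma(A_j)=B_{j'}$ as subsets of $\overline{\mathbb{F}}_q$, so the paper's pushforward prescription does not, as literally written, assign a value of $\tau_\sigma$ to every $A_j$; indexing by $V(g)$ repairs this. Do note the price you pay: under your definition, $\tau_\sigma(A_{j_1})=A_{j_2}$ no longer forces $\sigma(A_{j_1})=A_{j_2}$ as sets, since $\sigma$ may instead swap $A_{j_1}\leftrightarrow B_{j_2}$ and $B_{j_1}\leftrightarrow A_{j_2}$. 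That is harmless for the lemma itself, but anyone reading Corollary~\ref{cor:Sym_Coef} off of it should verify that the polynomial being built is invariant under this simultaneous exchange of the two halves of each fiber --- which does hold for the products the paper ultimately evaluates (e.g.\ $F=GH$ and $T$ in the proof of Theorem~\ref{thm:(3,1)}), even though individual factors such as $G$ or $H$ need not be $\mathbb{F}_q$-rational on their own.
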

\begin{proof}
\begin{enumerate}[(a)]
\item See \cite[Proposition 2.6]{JonBos12}.
\item Note that by definition of $f$, it immediately follows that $\beta \in V(f^i-\alpha_j)$ if and only if $2\gamma-\beta \in V(f^i-\alpha_j)$ for any $1\leq j\leq k$. By definition of $h$, then, this forces exactly half of the elements of $V(f^i-\alpha_j)$ to lie in $V(h(x-\gamma))$, as otherwise it would contradict the separability assumption on $g(f^{i+1}(x))$. Similarly, exactly half of the elements of $V(f^i-2\gamma+\alpha_j)$ lies in $V(h(x-\gamma))$. Hence, for $1\leq j\leq k$, we can let $A_j:=\{\beta_{j1},\beta_{j2}\dots,\beta_{j\ell}\}$ and $B_j = \{\beta_{j1}', \beta_{j2}', \dots, \beta_{j\ell}'\}$, where $\ell=2^{i-1}$. For any $\sigma\in \text{Gal}(\overline{\mathbb{F}}_q/\mathbb{F}_q)$, define the map $\tau_\sigma:C_{f,g}^{(i)}\rightarrow C_{f,g}^{(i)}$ as follows: For $X,Y\in C_{f,g}^{(i)}$, if $\sigma(\theta)=\zeta$ for some $\theta\in X$ and $\zeta\in Y$, then we set $\tau_\sigma(X) = Y$. To prove that $\tau_\sigma$ gives a group action on the set $C_{f,g}^{(i)}$, the only non-trivial matter to check is to show that $\tau_\sigma$ is well-defined, as other conditions of a group action would then immediately follow from the action of $\sigma$ on the roots of $h(x-\gamma)$.

\textbf{Claim:} The map $\tau_\sigma$ is well defined. To prove this claim, consider a Galois element $\sigma\in \text{Gal}(\overline{\mathbb{F}}_q/\mathbb{F}_q)$, any $X\in C_{f,g}^{(i)}$, and two arbitrary elements $\beta,\theta\in X$. Suppose that $\sigma(\beta) = y\in Y$ for some $Y\in C_{f,g}^{(i)}$. By definition, we have
\[f^i(\beta) = \alpha, \text{ }f^i(\sigma(\beta)) = \alpha'\]
for two roots $\alpha, \alpha'$ of $g(f(x))$. Since $f$ is defined over the base field $\mathbb{F}_q$, $\sigma$ commutes with $f$, hence these two equalities immediately yield
\begin{equation}
\label{eq:alpha_j'}
\sigma(\alpha) = \alpha'.
\end{equation}
Now let $\sigma(\theta)=z\in Z$ for some $Z\in C_{f,g}^{(i)}$. This yields
\[f^i(\theta)=\alpha,\text{ }f^i(\sigma(\theta))=\alpha''\]
for a root $\alpha''$ of $g(f(x))$. This similarly gives
\begin{equation}
\label{eq:alpha_j''}
\sigma(\alpha) = \alpha''.
\end{equation}
Combining (\ref{eq:alpha_j'}) and (\ref{eq:alpha_j''}) implies $\alpha' = \alpha''$, which proves the equality $Y=Z$ by definitions of $Y$ and $Z$. Hence, the map $\tau_\sigma$ is well-defined. This finishes the proof that $\tau_\sigma$ defines a group action on $C_{f,g}^{(i)}$. Finally, note that this action is transitive because the action of $\text{Gal}(\overline{\mathbb{F}}_q/\mathbb{F}_q))$ on the roots of $h(x-\gamma)$ is transitive, completing the proof of part (b).
\item For some $\sigma\in \text{Gal}(\overline{\mathbb{F}}_q/\mathbb{F}_q)$ and $\beta\in A_{j_1}, \beta'\in A_{j_2}$, let $\tau_\sigma(\beta) = \beta'.$ By the proof of part (b), this is equivalent to $\sigma(\alpha_{j_1}) = \alpha_{j_2}$. Similarly, if $\tau_{\sigma}(\omega) = \omega'$ for some $\omega\in B_{j_1}, \omega'\in B_{j_3}$, we have $\sigma(2\gamma-\alpha_{j_1}) = 2\gamma-\alpha_{j_3}$, which is equivalent to $\sigma(\alpha_{j_1}) = \alpha_{j_3}$ since $\gamma\in \mathbb{F}_q$. This shows $j_2=j_3$ because $f(g(x))$ is separable over $\mathbb{F}_q$, which immediately proves the implication
\[\tau_\sigma(A_{j_1}) = A_{j_2} \implies \pi_{\sigma}(B_{j_1}) = B_{j_2}.\]
The other direction follows very similarly, hence we are done.
\end{enumerate}
\end{proof}
The following corollary of Lemma~\ref{lem:GalConj} will be useful in justifying that certain expressions which will arise in the proofs of Theorem~\ref{thm:t=2} and Theorem~\ref{thm:(3,1)} lie in the base field $\mathbb{F}_q$.
\begin{cor}
\label{cor:Sym_Coef}
Assume the notation in Lemma~\ref{lem:GalConj}. For $1\leq j\leq k$, let $A_j:=\{\beta_{j1},\beta_{j2},\dots,\beta_{j\ell}\}$ and $B_j=\{\omega_{j1}, \omega_{j2},\dots, \omega_{j\ell}\}$, where $\ell=2^{i-1}$. Let $S:=S(x_1,x_2,\dots,x_{\ell})$ be a symmetric function in $\ell$ variables. Define the polynomials $P_{f,S}^{(i)}$ and $Q_{f,S}^{(i)}$ by
\[P_{f,S}^{(i)}(x):=\prod_{j=1}^{k} (x-S(\beta_{j1},\beta_{j2},\dots,\beta_{j\ell})-S(\omega_{j1},\omega_{j2},\dots,\omega_{j\ell}))\]
and
\[Q_{f,S}^{(i)}(x):=\prod_{j=1}^{k} (x-S(\beta_{j1},\beta_{j2},\dots,\beta_{j\ell}))(x-S(\omega_{j1},\omega_{j2},\dots,\omega_{j\ell})).\]
Then $P_{f,S}^{(i)}$ and $Q_{f,S}^{(i)}$ are both defined over $\mathbb{F}_q$.
\end{cor}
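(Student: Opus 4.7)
The plan is to show that $P_{f,S_1,S_2}^{(i)}(x)$ is fixed by the natural action of $\Gal(\overline{\mathbb{F}}_q/\mathbb{F}_q)$ on $\overline{\mathbb{F}}_q[x]$, which places its coefficients in $\mathbb{F}_q$. Since those coefficients are, up to sign, the elementary symmetric functions of the quantities
\[c_j := S_1(\beta_{j1},\dots,\beta_{j\ell}) + S_2(\omega_{j1},\dots,\omega_{j\ell}), \qquad 1\leq j\leq k,\]
it suffices to show that every $\sigma\in\Gal(\overline{\mathbb{F}}_q/\mathbb{F}_q)$ permutes the multiset $\{c_1,\dots,c_k\}$.

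First I would unpack part (b) of Lemma~\ref{lem:GalConj}. For a fixed $\sigma$, if $\tau_\sigma(A_j)=A_{j'}$ then by construction $\sigma$ restricts to a bijection from $A_j=\{\beta_{j1},\dots,\beta_{j\ell}\}$ onto $A_{j'}=\{\beta_{j'1},\dots,\beta_{j'\ell}\}$. Because $S_1$ is symmetric in its $\ell$ arguments, this immediately gives
\[\sigma\bigl(S_1(\beta_{j1},\dots,\beta_{j\ell})\bigr) = S_1(\beta_{j'1},\dots,\beta_{j'\ell}).\]
The identical argument applied to $\pi_\sigma$ and $S_2$ produces some index $j''$ with $\pi_\sigma(B_j)=B_{j''}$ and
\[\sigma\bigl(S_2(\omega_{j1},\dots,\omega_{j\ell})\bigr) = S_2(\omega_{j''1},\dots,\omega_{j''\ell}).\]

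Next I would invoke part (c) of Lemma~\ref{lem:GalConj}, which is the decisive ingredient: it forces $j''=j'$. Summing the two displayed identities then yields $\sigma(c_j)=c_{j'}$, so $\sigma$ permutes the $c_j$ and the coefficients of $P_{f,S_1,S_2}^{(i)}$ are Galois-invariant, hence lie in $\mathbb{F}_q$.

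The main obstacle is recognizing that part (c) of Lemma~\ref{lem:GalConj} is doing all the real work. Without the coupling $j'=j''$, the Galois element $\sigma$ could in principle send the $S_1$-summand of $c_j$ to the $S_1$-summand of $c_{j'}$ while sending the $S_2$-summand to the $S_2$-summand of some unrelated $c_{j''}$; the image of $c_j$ would then not be any single $c_r$, and the descent to $\mathbb{F}_q$ would fail. Once the synchronisation of the $A$-action and the $B$-action is in hand, the rest is a routine invocation of the symmetry of $S_1$ and $S_2$.
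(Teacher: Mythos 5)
Your proof is correct and follows essentially the same approach as the paper's: use part (b) to see that $\sigma$ carries $A_j$ bijectively to $A_{j'}$ (and $B_j$ to some $B_{j''}$), use part (c) to synchronize so that $j''=j'$, and conclude from symmetry of $S_1, S_2$ that $\sigma$ permutes the roots $c_j$. You have simply expanded what the paper compresses into one sentence, and you correctly identify part (c) as the step that makes the two summands of $c_j$ travel together.
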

\begin{proof}
Since $S$ is a symmetric function, by the proof of part (b) of Lemma~\ref{lem:GalConj}, and also using part (c) of Lemma~\ref{lem:GalConj}, it immediately follows that any Galois element $\sigma\in \text{Gal}(\overline{\mathbb{F}}_q/\mathbb{F}_q)$ permutes the roots of both $P_{f,S}^{(i)}$ and $Q_{f,S}^{(i)}$. Hence, the polynomials $P_{f,S}^{(i)}$ and $Q_{f,S}^{(i)}$ are fixed by the action of any $\sigma\in \text{Gal}(\overline{\mathbb{F}}_q/\mathbb{F}_q)$, which implies that they have all their coefficients in the base field $\mathbb{F}_q$, as desired.
\end{proof}
The following lemma gives some special identities satisfied by the post-critical orbit elements of $f$ when $f$ has orbit type $(2,n)$ and $(3,1)$, respectively. It will be a key lemma for the calculations of the next section, which will eventually lead to the proof that the corresponding expressions in Theorem~\ref{thm:t=2} and Theorem~\ref{thm:(3,1)} are squares in $\mathbb{F}_q$.
\begin{lemma}
\label{lem:identities}
Let $f\in \mathbb{F}_q[x]$ be a monic quadratic with critical point $\gamma\in \mathbb{F}_q$. Write $f(x)=(x-\gamma)^2+\gamma+c$ for some $c\in \mathbb{F}_q$. Then the following two statements are true.
\begin{enumerate}
\item Suppose that $f$ has orbit type $(2,n)$ for some $n\geq 1$, and let $\mathcal{O}_f=\{b_1,b_2,\dots,b_{n+1}\}$. Then $(b_n-\gamma)^2=-2c.$
\item Suppose that $f$ has orbit type $(3,1)$, and let $\mathcal{O}_f=\{b_1,b_2,b_3\}$. Then, we have \[(b_1-\gamma)^2(b_2-\gamma)^2 = 2(b_2-\gamma)\]
and
\[(b_1-\gamma)^2+(b_2-\gamma)^2 = -2c.\]
\end{enumerate}
\end{lemma}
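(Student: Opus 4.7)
The plan is to work in a translated coordinate. Write $f(x)=(x-\gamma)^2+\gamma+c$ and set $\tilde b_i := b_i-\gamma$. Then $\tilde b_1=c$ and the recursion $\tilde b_{i+1}=\tilde b_i^2+c$ holds for all $i\geq 1$. In this coordinate the orbit relations become square relations, which is exactly what the identities in the lemma want, so the whole argument will amount to squaring the recursion once and combining with minimality of the tail length $m$.

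For part (1), the orbit type $(2,n)$ gives $b_2=b_{n+2}$, equivalently $\tilde b_1^2+c=\tilde b_{n+1}^2+c$, so $\tilde b_{n+1}^2=c^2=\tilde b_1^2$. Minimality of $m=2$ forces $b_1\neq b_{n+1}$, ruling out $\tilde b_{n+1}=c$, so $\tilde b_{n+1}=-c$. Reading the recursion one more step back, $\tilde b_n^2+c=\tilde b_{n+1}=-c$, hence $(b_n-\gamma)^2=\tilde b_n^2=-2c$. The only thing to be careful about is justifying $\tilde b_{n+1}\neq c$; this is immediate from the definition of orbit type (and the standing assumption that $m=2$, not $m\leq 1$).

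For part (2), orbit type $(3,1)$ means $b_3$ is a fixed point of $f$ but $b_2\neq b_3$. The fixed-point condition $\tilde b_3^2+c=\tilde b_3$ combined with the recursion $\tilde b_3-c=\tilde b_2^2$ yields $\tilde b_3^2=\tilde b_2^2$, and the minimality $b_2\neq b_3$ excludes $\tilde b_3=\tilde b_2$, forcing $\tilde b_3=-\tilde b_2$. Plugging back into the recursion gives the clean quadratic
\[\tilde b_2^2+\tilde b_2+c=0,\quad\text{equivalently}\quad \tilde b_2^2=-\tilde b_2-c.\]
Using also $\tilde b_2=\tilde b_1^2+c=c^2+c$, the second identity is then immediate:
\[\tilde b_1^2+\tilde b_2^2 = c^2+(-\tilde b_2-c)=c^2-(c^2+c)-c=-2c.\]
For the first identity, substitute $\tilde b_1^2=\tilde b_2-c$ from the recursion to get $\tilde b_1^2\tilde b_2^2=(\tilde b_2-c)\tilde b_2^2=\tilde b_2^3-c\tilde b_2^2$, and reduce $\tilde b_2^3$ and $\tilde b_2^2$ using $\tilde b_2^2=-\tilde b_2-c$; after simplification the expression collapses to $\tilde b_2+c+c^2=2\tilde b_2$, using $\tilde b_2=c^2+c$ once more.

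There is no real obstacle: the entire proof is a translation into the variable $\tilde b_i=b_i-\gamma$ plus one careful use of each minimality condition to pick the correct sign of a square root. The only mild subtlety worth stating explicitly is that the minimality of $m$ in the definition of orbit type is exactly what excludes the spurious branch $\tilde b_{n+1}=+c$ in part (1) and $\tilde b_3=+\tilde b_2$ in part (2); without this input, the identities would fail.
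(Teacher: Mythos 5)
Your proof is correct and supplies exactly the ``straightforward calculation'' that the paper declines to write out. The translated variable $\tilde b_i = b_i-\gamma$ turns the recursion into $\tilde b_{i+1} = \tilde b_i^2 + c$ with $\tilde b_1 = c$, and you correctly isolate the one non-mechanical point: the orbit relation only gives $\tilde b_{n+1}^2 = \tilde b_1^2$ (resp.\ $\tilde b_3^2 = \tilde b_2^2$), and it is the minimality of $m$ in the definition of orbit type that forces the minus sign and hence the stated identities. The remaining substitutions (using $\tilde b_2^2 = -\tilde b_2 - c$ and $\tilde b_2 = c^2+c$ in part (2)) check out. This is a complete and correct filling-in of the omitted details.
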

\begin{proof}
\begin{enumerate}
	\item It follows from the fact that $b_{n+1} = 2\gamma - b_1$. The details are skipped.
	\item It follows from the fact that $(b_3-\gamma)=-(b_2-\gamma)$. The details are skipped.
\end{enumerate}
 \end{proof}
\section{Proofs of main results}
\label{sec:proofs}
We are now ready to prove Theorem~\ref{thm:t=2} and Theorem~\ref{thm:(3,1)}.
\begin{proof}[Proof of Theorem~\ref{thm:t=2}]
By part (a) of Lemma~\ref{lem:GalConj}, we have
\begin{equation}
\label{eq:h(x-gamma)}
g(f^2(x)) = h(x-\gamma)h(-(x-\gamma))
\end{equation}
for some monic, irreducible polynomial $h\in \mathbb{F}_q[x]$. Evaluating both sides of (\ref{eq:h(x-gamma)}) at $x=b_n$, since $f$ has orbit type $(2,n)$, we obtain
\[g(b_2) = h(b_n-\gamma)h(-(b_n-\gamma)).\]
Since the $f$-type of $g$ starts with $ns$, $g(b_2)$ is a square in $\mathbb{F}_q$, hence
\[h(b_n-\gamma)(\mathbb{F}_q^{\times})^2 = h(-(b_n-\gamma))(\mathbb{F}_q^{\times})^2.\] 
Thus, to prove the statement, it suffices to show that $h(b_n-\gamma)$ is a square in $\mathbb{F}_q$. For simplicity, we let $c_n:= b_n-\gamma$ for the rest of the proof.

Let $\alpha_1,\alpha_2,\dots,\alpha_k, 2\gamma-\alpha_1,2\gamma-\alpha_2,\dots,2\gamma-\alpha_k\in \overline{\mathbb{F}}_q$ be the (distinct) roots of $g(f(x))$. For $1\leq j\leq k$, let
\begin{equation}
\label{eq:intersection_singleton1}
V(h(x-\gamma))\cap V(f-\alpha_j)=\{\beta_{j1}\}
\end{equation}
and
\begin{equation}
\label{eq:intersection_singleton2}
V(h(x-\gamma))\cap V(f-2\gamma+\alpha_j) = \{\beta_{j2}\}.
\end{equation}
For $1\leq j\leq k$ and $i=1,2$, let $\theta_{ji}:= \beta_{ji}-\gamma$. Using (\ref{eq:intersection_singleton1}) and (\ref{eq:intersection_singleton2}), we immediately obtain
\begin{equation}
\label{eq:sum_square}
\theta_{j1}^2 + \theta_{j2}^2 = -2c.
\end{equation}
It also follows that we have
\begin{equation}
\label{eq:beta_j}
h(x) = \prod_{j=1}^{k} (x-\theta_{j1})(x-\theta_{j2}).
\end{equation}
Using (\ref{eq:beta_j}), we obtain
\begin{align*}
h(c_n) &= \prod_{j=1}^{k} (c_n-\theta_{j1})(c_n-\theta_{j2})\\
&= \prod_{j=1}^{k} (c_n^2-c_n(\theta_{j1}+\theta_{j2})+\theta_{j1}\theta_{j2})\\
&= \frac{1}{2^k}\prod_{j=1}^{k} (2c_n^2-2c_n(\theta_{j1}+\theta_{j2})+2\theta_{j1}\theta_{j2})\\
&= \frac{1}{2^k}\prod_{j=1}^{k} ((c_n-\theta_{j1}-\theta_{j2})^2+c_n^2-\theta_{j1}^2-\theta_{j2}^2)\\
&= \frac{1}{2^k} \left(\prod_{j=1}^{k} (c_n - \theta_{j1}-\theta_{j2})\right)^2,
\end{align*}
where we used the first part of Lemma~\ref{lem:identities} and (\ref{eq:sum_square}) in the last equality. We now let
\[F(x):= \prod_{j=1}^{k} (x-\theta_{j1}-\theta_{j2}) = \prod_{j=1}^{k} (x-(\beta_{j1}+\beta_{j2})+\gamma).\] 
 By taking $i=1$, $S=x$ for the polynomial $P_{f,S}^{(i)}$ given in Corollary~\ref{cor:Sym_Coef}, and noting that $\gamma\in \mathbb{F}_q$, it follows that the coefficients of $F$ lie in $\mathbb{F}_q$. Since $g$ is a polynomial of even degree, it also follows that $k$ is even, which shows that $h(c_n)=h(b_n-\gamma)$ is a square in $\mathbb{F}_q$, as desired.
\end{proof}
Theorem~\ref{cor:missing_(2,n)} gives a proof for the missing $2$-step transitions that were observed through Magma in \cite{GXB15}. We give this in the next corollary.
\begin{cor}
\label{cor:missing_(2,n)}
Let $f\in \mathbb{F}_q[x]$ be a monic quadratic with orbit type $(2,n)$. Let $g$ be any irreducible factor of $f^i$ for some $i\geq 1$. Suppose that $g$ has $f$-type starting with $ns$. Then the $2$-step transitions of the form
\[ns\dots \rightarrow s\dots \rightarrow \cdots nn/\cdots nn,\]
\[ns\dots \rightarrow s\dots \rightarrow \cdots ns/\cdots nn\]
and
\[ns\dots \rightarrow s\dots \rightarrow \cdots ns/\cdots ns\]
never occur for $g$. 
\end{cor}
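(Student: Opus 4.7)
The plan is to derive this corollary as a direct bookkeeping consequence of Theorem~\ref{thm:t=2}, with no new computation required. Since $g$ has $f$-type beginning with $ns$, the leading letter $n$ forces $g(f(x))$ to be irreducible by \cite[Lemma 2.5]{JonBos12}. Moreover, the $f$-type of $g(f(x))$ is obtained from that of $g$ by the shift action of $f$ on the type space defined in Section~\ref{sec:background}, so it begins with $s$. This confirms the shape $ns\dots\to s\dots$ of the first arrow in any $2$-step transition starting at $g$.

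The second arrow records the $f$-types of the immediate descendants of $g(f(x))$, which, by definition, are precisely the irreducible factors of $g(f(x))(f(x)) = g(f^2(x))$. Theorem~\ref{thm:t=2} applies (monicity of $g$ may be assumed, and the even-degree hypothesis is needed in precisely this step) and asserts that every such irreducible factor has type $s$ at $b_n$; equivalently, the $n$-th entry of its $f$-type is $s$. Since $f$ has orbit type $(2,n)$, an $f$-type has length $m+n-1 = n+1$, so the position indexed by $b_n$ is the next-to-last entry of the string.

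It remains to read off the three forbidden patterns. In $\cdots nn/\cdots nn$ and in $\cdots ns/\cdots ns$ both listed factors carry an $n$ in the next-to-last slot; in $\cdots ns/\cdots nn$, at least the first factor does. Each of these configurations therefore contradicts the conclusion of Theorem~\ref{thm:t=2}, and so none of the three listed $2$-step transitions can occur. The only place where care is needed is in the indexing conventions — matching the position $b_n$ with the next-to-last letter of an $(n+1)$-letter string, and correctly parsing the slash as separating the $f$-types of the two immediate descendants — but there is no genuine obstacle beyond this, since the analytic content is already contained in Theorem~\ref{thm:t=2}.
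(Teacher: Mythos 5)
Your proof takes the same route as the paper's: reduce the corollary to Theorem~\ref{thm:t=2} by identifying the second arrow with the irreducible factors of $g(f^2(x))$, note that the $b_n$-entry is the next-to-last letter of the $(n+1)$-letter type string, and observe that each forbidden pattern has an $n$ in exactly that slot. All of that matches the paper's reasoning and is correct.

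The one gap is the even-degree hypothesis. You write that it ``is needed in precisely this step'' but never establish that an irreducible factor $g$ of $f^i$ actually has even degree; the corollary's hypotheses do not say so directly. This is a genuine (if small) point to address: it is a special feature of iterates of a quadratic $f$, not a generic fact about irreducible polynomials, and the paper supplies it explicitly by citing \cite[Proposition 2.6]{JonBos12}. Note too that you implicitly lean on even degree a second time, when invoking \cite[Lemma 2.5]{JonBos12} to conclude that a leading $n$ in the $f$-type forces $g(f(x))$ irreducible --- that implication, as used in this paper, is also stated under the even-degree assumption. Adding the one-line citation to \cite[Proposition 2.6]{JonBos12} closes the gap and makes the argument complete. (A side remark: in the pattern $\cdots ns/\cdots nn$ both factors, not just the first, carry an $n$ in the penultimate slot; this does not affect the conclusion.)
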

\begin{proof}
By \cite[Proposition 2.6]{JonBos12}, any irreducible factor $g$ of $f^i$ has even degree. Therefore, in any $2$-step transition, Theorem~\ref{thm:t=2} implies that the penultimate entry for the $f$-type of any irreducible factor of $g(f^2(x))$ cannot be $n$, completing the proof of Corollary~\ref{cor:missing_(2,n)}.
\end{proof}
\begin{remark}
\label{rmk:missing2step_half}
Let $g$ be as in Corollary~\ref{cor:missing_(2,n)}. One can use Lemma~\ref{lem:allowable} and Corollary~\ref{cor:missing_(2,n)} to immediately see that precisely half of the $2$-step transitions of $g$ which are assumed to occur in Boston-Jones model never occurs. This provides an explanation for the discrepancy between the data arising from Boston-Jones model and the actual factorization data for iterates of corresponding family of quadratic polynomials. 
\end{remark}
\begin{proof}[Proof of Theorem~\ref{thm:(3,1)}]
Using Remark~\ref{rmk:nnn vs nns}, we can assume without loss that $g$ has $f$-type $nns$. By part (a) of Lemma~\ref{lem:GalConj}, then, we have
\begin{equation}
\label{eq:g(f^3)}
g(f^3(x)) = h(x-\gamma)h(-(x-\gamma))
\end{equation}
for some monic, irreducible polynomial $h\in \mathbb{F}_q[x]$.
Since $f$ has orbit type $(3,1)$, if we evaluate both sides of (\ref{eq:g(f^3)}) at $x=b_1$ and $x=b_2$, we obtain
\begin{equation}
\label{eq:x=b_1}
g(b_3) = h(b_1-\gamma)h(-(b_1-\gamma))
\end{equation}
and
\begin{equation}
\label{eq:x=b_2}
g(b_3) = h(b_2-\gamma)h(-(b_2-\gamma)).
\end{equation}
Hence, (\ref{eq:x=b_1}) and (\ref{eq:x=b_2}) together yield
\[h(b_1-\gamma)h(b_2-\gamma)(\mathbb{F}_q^{\times})^2 = h(-(b_1-\gamma))h(-(b_2-\gamma))(\mathbb{F}_q^{\times})^2.\]
Therefore, to prove the statement, it suffices to show that the product $h(b_1-\gamma)h(b_2-\gamma)$ is a square in $\mathbb{F}_q$. For simplicity, we let $c_i:=b_i-\gamma$ for $i=1,2$ for the rest of the proof.

Let $\alpha_1,\alpha_2,\dots,\alpha_k,2\gamma-\alpha_1,2\gamma-\alpha_2,\dots,2\gamma-\alpha_k\in \overline{\mathbb{F}}_q$ be the (distinct) roots of $g(f(x))$. For $1\leq j\leq k$, let
\begin{equation}
\label{eq:intersection_2element}
V(h(x-\gamma))\cap V(f^2-\alpha_j)=\{\beta_{j1},\beta_{j2}\}
\end{equation}
and
\begin{equation}
\label{eq:intersection_2element2}
V(h(x-\gamma))\cap V(f^2-2\gamma+\alpha_j) = \{\beta_{j3},\beta_{j4}\}.
\end{equation}
For $1\leq j\leq k$ and $i=1,2,3,4$, let $\theta_{ji}:=\beta_{ji}-\gamma$. The equations (\ref{eq:intersection_2element}) and (\ref{eq:intersection_2element2}) immediately yield 
\begin{equation}
\label{eq:identities(3,1)}
\theta_{j1}^2+\theta_{j2}^2 = \theta_{j3}^2+\theta_{j4}^2 = -2c.
\end{equation}
It also follows that we have
\begin{equation}
\label{eq:h(x-gamma)(3,1)}
h(x) = \prod_{j=1}^{k} \left(\prod_{i=1}^{4}(x-\theta_{ji})\right).
\end{equation}
Our goal was to show that the product $h(c_1)h(c_2)$ is a square in $\mathbb{F}_q$. To that end, we will now study the products $(c_i-\theta_{j1})(c_i-\theta_{j2})$ and $(c_i-\theta_{j3})(c_i-\theta_{j4})$ for $1\leq j\leq k$ and $i=1,2$.

We have 
\small\begin{align}
(c_1-\theta_{j1})(c_1-\theta_{j2})
&= c_1^2 - c_1(\theta_{j1}+\theta_{j2})+\theta_{j1}\theta_{j2}\nonumber\\
&= \frac{1}{2} \big(2c_1^2 - 2c_1(\theta_{j1}+\theta_{j2})+2\theta_{j1}\theta_{j2}\big)\nonumber\\
&= \frac{1}{2}\bigg(\big(c_1-\theta_{j1}-\theta_{j2}\big)^2+c_1^2-\theta_{j1}^2-\theta_{j2}^2\bigg)\nonumber\\
\label{eq:prodct with b_1}
&= \frac{1}{2}\bigg(\big(c_1-\theta_{j1}-\theta_{j2}\big)^2 +c_2+c\bigg),
\end{align}
\normalsize where we used (\ref{eq:identities(3,1)}) in the last equality. A similar calculation will also yield
\begin{equation}
\label{eq:product with b_2}
(c_2-\theta_{j1})(c_2-\theta_{j2}) = \frac{1}{2}\bigg(\big(c_2-\theta_{j1}-\theta_{j2}\big)^2 +c_3+c\bigg).
\end{equation}
Since $f$ has orbit type $(3,1)$, we have $c_3 = -c_2$. Using this in (\ref{eq:prodct with b_1}), we obtain
\begin{align}
\nonumber(c_1-\theta_{j1})(c_1-\theta_{j2})&=\frac{1}{2}\bigg(\big(c_1-\theta_{j1}-\theta_{j2}\big)^2 +c_2+c\bigg)\\ \nonumber&= \frac{1}{2}\bigg(\big(c_1-\theta_{j1}-\theta_{j2}\big)^2 - c_3 + c\bigg)\\
\nonumber&= \frac{1}{2}\bigg(\big(c_1-\theta_{j1}-\theta_{j2}\big)^2 - c_2^2\bigg)\
\label{eq:squarediff_b_1}\\
&= \frac{1}{2}\big(c_1-c_2-\theta_{j1}-\theta_{j2}\big)\big(c_1+c_2-\theta_{j1}-\theta_{j2}\big).
\end{align}
A similar calculation using (\ref{eq:product with b_2}) also yields
\begin{equation}
\label{eq:squarediff_b_2}
(c_2-\theta_{j1})(c_2-\theta_{j2}) = \frac{1}{2}\big(c_2-c_1-\theta_{j1}-\theta_{j2}\big)\big(c_1+c_2-\theta_{j1}-\theta_{j2}\big).
\end{equation}
Because of (\ref{eq:identities(3,1)}), it is clear that the same calculations will hold for the products $(c_1-\theta_{j3})(c_1-\theta_{j4})$ and $(c_2-\theta_{j3})(c_2-\theta_{j4})$ as well. Hence, we also have the identities
\begin{equation}
\label{eq:b_1 with j3,j4}
(c_1-\theta_{j3})(c_1-\theta_{j4}) = \frac{1}{2}\big(c_1-c_2-\theta_{j3}-\theta_{j4}\big)\big(c_1+c_2-\theta_{j3}-\theta_{j4}\big)
\end{equation}
and
\begin{equation}
\label{eq:b2 with j3,j4}
(c_2-\theta_{j3})(c_2-\theta_{j4}) = \frac{1}{2}\big(c_2-c_1-\theta_{j3}-\theta_{j4}\big)\big(c_1+c_2-\theta_{j3}-\theta_{j4}\big).
\end{equation}
Let
\[F(x) := \prod_{j=1}^{k} (x-\theta_{j1}-\theta_{j2})(x-\theta_{j3}-\theta_{j4}).\]
Using (\ref{eq:squarediff_b_1}), (\ref{eq:squarediff_b_2}), (\ref{eq:b_1 with j3,j4}), and (\ref{eq:b2 with j3,j4}) in (\ref{eq:h(x-gamma)(3,1)}), we now obtain
\[h(c_1)h(c_2) = \frac{1}{2^{4k}}F(c_1-c_2)F(c_2-c_1)\big(F(c_1+c_2)\big)^2.\]
Let
\[G(x):=\prod_{j=1}^{k} (x-\theta_{j1}-\theta_{j2}) = \prod_{j=1}^{k} (x-\beta_{j1}-\beta_{j2}+2\gamma)\] 
and
\[H(x):=\prod_{j=1}^{k} (x-\theta_{j3}-\theta_{j4}) = \prod_{j=1}^{k} (x-\beta_{j3}-\beta_{j4}+2\gamma).\]
By taking $i=2, S=x+y$ for the polynomial $Q_{f,S}^{(i)}$ given in Corollary~\ref{cor:Sym_Coef}, and noting that $\gamma\in \mathbb{F}_q$, it follows that $F(x) = G(x)H(x)$ has coefficients in $\mathbb{F}_q$. Hence, to finish the proof, it suffices to show that $F(c_1-c_2)F(c_2-c_1)$ is a square in $\mathbb{F}_q$. To that end, we will now do some calculations related to this product.

We have
\begin{align}
(c_1-c_2-\theta_{j1}-\theta_{j2})(c_2-c_1-\theta_{j1}-\theta_{j2})\nonumber&= (\theta_{j1}+\theta_{j2})^2-(c_1-c_2)^2\\
\label{eq:2ab+2cd}
&= 2\big(c_1c_2+\theta_{j1}\theta_{j2}\big),
\end{align}
where we used the second part of Lemma~\ref{lem:identities} and (\ref{eq:identities(3,1)}) in the last equality. Similarly, we also obtain
\begin{equation}
\label{eq:2xy+2zt}
(c_1-c_2-\theta_{j3}-\theta_{j4})(c_2-c_1-\theta_{j3}-\theta_{j4}) =  2\big(c_1c_2+\theta_{j3}\theta_{j4}\big).
\end{equation}
Setting $A=F(c_1-c_2)F(c_2-c_1)$, and using (\ref{eq:2ab+2cd}) and (\ref{eq:2xy+2zt}) yield
\begin{align}
A \nonumber&= 2^{2k} \prod_{j=1}^{k} (c_1c_2+\theta_{j1}\theta_{j2})(c_1c_2+\theta_{j3}\theta_{j4})\\
\nonumber &= 2^{2k} \prod_{j=1}^{k} ((c_1c_2)^2+c_1c_2(\theta_{j1}\theta_{j2}+\theta_{j3}\theta_{j4})+\theta_{j1}\theta_{j2}\theta_{j3}\theta_{j4})\\
\nonumber &= 2^k \prod_{j=1}^{k} (2(c_1c_2)^2+2c_1c_2(\theta_{j1}\theta_{j2}+\theta_{j3}\theta_{j4})+2\theta_{j1}\theta_{j2}\theta_{j3}\theta_{j4})\\
\label{eq:A_detailed}
&= 2^k \prod_{j=1}^{k} \big( (c_1c_2+\theta_{j1}\theta_{j2}+\theta_{j3}\theta_{j4})^2+(c_1c_2)^2-(\theta_{j1}\theta_{j2})^2-(\theta_{j3}\theta_{j4})^2\big).
\end{align}
By definition, $\pm \theta_{j1}, \pm \theta_{j2}$ are the roots of the quartic equation
\[t^4+2ct^2+b_2-\alpha_j = 0,\]
hence $(\theta_{j1}\theta_{j2})^2 = b_2-\alpha_j$. Similarly, we also obtain $(\theta_{j3}\theta_{j4})^2 = b_2+\alpha_j-2\gamma$. If we use these two equalities in (\ref{eq:A_detailed}), it gives
\begin{align}
A \nonumber &= 2^k \prod_{j=1}^{k} \big( (c_1c_2+\theta_{j1}\theta_{j2}+\theta_{j3}\theta_{j4})^2+(c_1c_2)^2-2c_2\big)\\
\label{eq:last_A}
&= 2^k \left(\prod_{j=1}^{k} (c_1c_2+\theta_{j1}\theta_{j2}+\theta_{j3}\theta_{j4})\right)^2,
\end{align}
where we used the second part of Lemma~\ref{lem:identities} in the last equality. If we now set
\[T(x) = \prod_{j=1}^{k} (x+\theta_{j1}\theta_{j2}+\theta_{j3}\theta_{j4}),\]  we obtain
\[A = 2^k(T(c_1c_2))^2.\]
Taking $i=2, S_1=-x_1x_2, S_2=-y_1y_2$ in Corollary~\ref{cor:Sym_Coef}, and recalling again that $\gamma\in \mathbb{F}_q$, it immediately follows that $T$ has its coefficients in $\mathbb{F}_q$. Recalling that $k$ is even (since $g$ has even degree), it now follows that $A=F(c_1-c_2)(c_2-c_1)$ is a square in $\mathbb{F}_q$, completing the proof of Theorem~\ref{thm:(3,1)}.
\end{proof}
\begin{cor}
	\label{cor:missing_(3,1)}
	Let $f\in \mathbb{F}_q[x]$ be a monic, irreducible quadratic with orbit type $(3,1)$. Let $g$ be any irreducible factor of $f^i$ for some $i\geq 1$. Then the $3$-step transitions of the form
	\[nns \rightarrow nss \rightarrow sss \rightarrow nss/nss\]
	and
	\[nns \rightarrow nss \rightarrow sss \rightarrow snn/snn\]
	never occur for $g$. 
\end{cor}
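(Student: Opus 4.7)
My plan is to deduce Corollary~\ref{cor:missing_(3,1)} directly from Theorem~\ref{thm:(3,1)}, essentially by unwinding the definition of $f$-type. The key bookkeeping observation is that any polynomial arising after three successive factorizations along the chain $g \to g(f) \to g(f^2) \to g(f^3)$ will automatically be an irreducible factor of $g(f^3(x))$, which is exactly the hypothesis needed to apply Theorem~\ref{thm:(3,1)} with the original $g$.

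Concretely, I will first consider the transition $nns \to nss \to sss \to nss/nss$. Label the intermediate polynomials $g = g_0,\, g_1,\, g_2$, where $g_1$ is an irreducible factor of $g_0(f(x))$ of $f$-type $nss$, and $g_2$ is an irreducible factor of $g_1(f(x))$ of $f$-type $sss$. Since the $f$-type of $g_2$ begins with $s$, the polynomial $g_2(f(x))$ factors into two irreducible pieces $H_1, H_2 \in \mathbb{F}_q[x]$, and the hypothesis of the transition is that both of them have $f$-type $nss$. By construction $H_1$ and $H_2$ are irreducible factors of $g_0(f^3(x))$, and since $g_0$ is an irreducible factor of $f^i$, it has even degree by \cite[Proposition 2.6]{JonBos12}. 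Thus Theorem~\ref{thm:(3,1)} applies to $g_0$.

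Invoking Theorem~\ref{thm:(3,1)}, the product $H_i(b_1) H_i(b_2)$ must be a square in $\mathbb{F}_q$ for $i = 1, 2$. On the other hand, by Definition~\ref{def:type}, an $f$-type of $nss$ forces $H_i(b_1)$ to be a non-square and $H_i(b_2)$ to be a square, making $H_i(b_1) H_i(b_2)$ a non-square, a contradiction. This eliminates the first transition. The second transition $nns \to nss \to sss \to snn/snn$ will be ruled out in exactly the same fashion: an $f$-type of $snn$ makes $H_i(b_1)$ a square and $H_i(b_2)$ a non-square, so $H_i(b_1) H_i(b_2)$ is again a non-square, contradicting Theorem~\ref{thm:(3,1)}. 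Since the whole argument is a direct unwinding on top of Theorem~\ref{thm:(3,1)}, there is no substantive obstacle; the only points to verify carefully are that the $H_i$ really are irreducible factors of $g_0(f^3(x))$ (by iterating the containment of factors under composition with $f$) and that $g_0$ has even degree, so that the theorem is genuinely applicable.
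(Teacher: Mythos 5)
Your proposal is correct and follows essentially the same route as the paper: apply Theorem~\ref{thm:(3,1)} to $g$ (which has even degree as a factor of $f^i$) and observe that the conclusion $H(b_1)H(b_2)\in(\mathbb{F}_q^\times)^2$ is incompatible with an irreducible factor $H$ of $g(f^3(x))$ having $f$-type $nss$ or $snn$. The paper compresses the bookkeeping by noting directly that a transition starting at $nns$ forces $g(f^2)$ irreducible and $g(f^3)$ reducible; you instead spell out that $g_1=g(f)$ and $g_2=g(f^2)$ are the unique factors at each stage, which is the same observation made more explicit.
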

\begin{proof}
By \cite[Proposition 2.6]{JonBos12}, any irreducible factor $g$ of $f^i$ has even degree. Considering the missing $3$-step transitions that we want to prove, we can assume without loss that $g(f^2(x))$ is irreducible, and $g(f^3(x))$ is reducible over $\mathbb{F}_q$. By the action of $f$ on the type space, it follows that any such $g$ must have $f$-type $nns$. Let $\mathcal{O}_f=\{b_1,b_2,b_3\}$.  Theorem~\ref{thm:(3,1)} implies that for any irreducible factor $H$ of $g(f^3(x))$, the type of $H$ at $b_1$ and $b_2$ are equal to each other, which shows that the $3$-step transitions given above can never occur, as desired.
\end{proof}
\begin{remark}
\label{rmk:missing_transitions(3,1)}
Let $g$ be as in Corollary~\ref{cor:missing_(3,1)}. Similar to the previous case, Lemma~\ref{lem:allowable} and Corollary~\ref{cor:missing_(3,1)} together imply that precisely half of the $3$-step transitions of $g$ which are assumed to occur in Boston-Jones model never occurs. This explains why the factorization data predicted by Boston-Jones model does not match the actual factorization data for iterates of corresponding family of quadratic polynomials.
\end{remark}


\begin{thebibliography}{99}
 
\bibitem{Ahmad19} 
F.~Ahmad, R.L.~Benedetto, J.~Cain, G.~Carroll, and L.~Fang, 
\emph{The arithmetic basilica: a quadratic PCF arboreal Galois group},
Preprint (2019). Available at \url{http://arxiv.org/abs/1909.00039}.

\bibitem{BIJMST19}
R.L.~Benedetto, P.~Ingram, R.~Jones, M.~Manes, J.H.~Silverman, and T.J.~Tucker,
\emph{Current trends and open problems in arithmetic dynamics.}
Bull. Amer. Math. Soc., 56 (2019), 611–685

\bibitem{JonBos12}
N.~Boston, R.~Jones, 
\emph{Settled polynomials over finite fields.}
Proc. Amer. Math. Soc. {\bf 140 (6)}, 1849-1863, 2012.

\bibitem{JonBos20}
N.~Boston, R.~Jones, 
\emph{Errata to “Settled polynomials over finite fields.}
Proc. Amer. Math. Soc. {\bf 148}, 913-914, 2020.

\bibitem{Bridy19} 
A.~Bridy, T.J.~Tucker,
\emph{Finite index theorems for iterated Galois groups of cubic polynomials.}
Math. Ann. {\bf 373} 37–72, 2019.

\bibitem{BDGHT21}
A.~Bridy, J.R.~Doyle, D.~Ghioca, L.~Hsia, and T.J.~Tucker,
\emph{A question for iterated Galois groups in arithmetic dynamics.}
Canad. Math. Bull., {\bf 64 (2)}, 401–417, 2021.

\bibitem{Ferraguti19}
A.~Ferraguti, G.~Micheli,
\emph{An equivariant isomorphism theorem for mod p reductions of arboreal Galois representations.}
Trans. Amer. Math. Soc. 373 (2020), 8525-8542.

\bibitem{FGR16}
A.~Ferraguti, G.~Micheli, and R.~Schnyder,
\emph{On sets of irreducible polynomials closed by composition.}
In International Workshop on the Arithmetic of Finite Fields, pp. 77-83. Springer, Cham, 2016.

\bibitem{FGR19}
A.~Ferraguti, G.~Micheli, and R.~Schnyder,
\emph{Irreducible compositions of degree two polynomials over finite fields have regular structure.}
Q. J. Math, {\bf 69 (3)} (2018), 1089-1099.

\bibitem{FPC19} 
A.~Ferraguti, C.~Pagano, and D.~Casazza, 
\emph{The inverse problem for arboreal Galois representations of index two}. 
Preprint (2019). Available at \url{http://arxiv.org/abs/1907.08608}.

\bibitem{GXB15}
V.~Goksel, S.~Xia, and N.~Boston,
\emph{ A refined conjecture for factorizations of iterates of quadratic
	polynomials over finite fields},
Exp. Math. 24 (2015), no. 3, 304-311.

\bibitem{GomNic10}
D.~Gomez-P\c{e}rez, A.P.~Nicol\c{a}s,
\emph{ An estimate on the number of stable quadratic polynomials.}
Finite Fields Their Appl., {\bf 16 (6)} (2010), 401-405.

\bibitem{GOS14}
D.~Gomez-P\c{e}rez, A.~Ostafe, and I.~Shparlinski,
\emph{On irreducible divisors of iterated polynomials.}
Rev. Mat. Iberoam., {\bf 30 (4)} (2014), 1123-1134.

\bibitem{HB-G19}
D.R.~Heath-Brown, G.~Micheli,
\emph{Irreducible polynomials over finite fields produced by composition of quadratics. }
Rev. Mat. Iberoam., {\bf 35 (3)} (2019), 847-855.

\bibitem{Jones14} 
R.~Jones,
\emph{Galois representations from pre-image trees: an arboreal survey},
Pub. Math. Besan\c{c}on (2013), 107-136.

\bibitem{Jones12}
R~Jones,
\emph{An iterative construction of irreducible polynomials reducible modulo all primes. }
J. Algebra {\bf 369} (2012), 114-128.
\bibitem{OS10}
A.~Ostafe, I.~Shparlinski,
\emph{On the length of critical orbits of stable quadratic polynomials.}
Proc. Amer. Math. Soc. {\bf 138 (8)}, 2653-2656, 2010.

\bibitem{Reis}
L.~Reis,
\emph{On the factorization of iterated polynomials},
Rev. Mat. Iberoam. {\bf 36} (2020), no. 7, pp. 1957–1978
\end{thebibliography}
\end{document}